\font\script=rsfs10 at 12pt
\def\H{{\mbox{\script H}\,\,}}
\def\N{\mathbb{N}}
\def\O{\Omega}
\def\R{\mathbb{R}}
\def\S{\mathbb{S}}
\def\F{\mathcal{F}}
\def\M{\mathcal{M}}
\newcommand{\be}{\begin{equation}}
\newcommand{\ee}{\end{equation}}
\newcommand{\bib}[4]{\bibitem{#1}{\sc#2: }{\it#3. }{#4.}}
\newcommand{\cp}{\mathop{\rm cap}\nolimits}
\def\comp{\subset\subset}
\def\bal{\begin{aligned}}
\def\eal{\end{aligned}}
\numberwithin{equation}{section}
\theoremstyle{plain}
\newtheorem{theo}{Theorem}[section]
\newtheorem{lemm}[theo]{Lemma}
\newtheorem{defi}[theo]{Definition}
\theoremstyle{remark}
\newtheorem{rema}[theo]{Remark}
\newcommand{\s}{\sigma}
\title[Continuous Steiner symmetrization and Blaschke-Santal\'o diagrams]{An application of the continuous Steiner symmetrization to Blaschke-Santal\'o diagrams}
\author[G. Buttazzo]{Giuseppe Buttazzo}
\author[A. Pratelli]{Aldo Pratelli}
\date{}
\begin{document}

\maketitle
{\it Dedicated to Enrique Zuazua for his 60th birthday}

\begin{abstract}
In this paper we consider the so-called procedure of {\it Continuous Steiner Symmetrization}, introduced by Brock in~\cite{bro95,bro00}. It transforms every domain $\Omega\comp\R^d$ into the ball keeping the volume fixed and letting the first eigenvalue and the torsion respectively decrease and increase. While this does not provide, in general, a $\gamma$-continuous map $t\mapsto\O_t$, it can be slightly modified so to obtain the $\gamma$-continuity for a $\gamma$-dense class of domains $\O$, namely, the class of polyedral sets in $\R^d$. This allows to obtain a sharp characterization of the Blaschke-Santal\'o diagram of torsion and eigenvalue.
\end{abstract}

\textbf{Keywords:} Blaschke-Santal\'o diagrams, continuous Steiner symmetrization, torsional rigidity, principal eigenvalue.

\textbf{2020 Mathematics Subject Classification:} 49Q10, 49J45, 49R05, 35P15, 35J25.

\section{Introduction}

The question of making a given domain $\O\subset\R^d$ more and more round, keeping constant its measure, up to reach a ball, was first considered by Steiner, who proposed to use successive symmetrizations through different hyperplanes. More precisely, given a domain $\Omega\subset\R^d$ and a direction $\nu\in \S^{d-1}$, the \emph{Steiner symmetrization of $\Omega$ with respect to $\nu$} is defined as 
\[
\Omega^*_\nu = \bigg\{ x \in \R^d:\, | x \cdot \nu | \leq \frac{\varphi\big(\pi(x)\big)}2\bigg\}\,,
\]
where $\pi(x)=x - \nu (x\cdot \nu)$ is the projection of any point $x\in \R^d$ onto the hyperplane orthogonal to $\nu$ and where, for each $y$ in this hyperplane,
\[
\varphi(y) = \H^1\big(\Omega\cap \pi^{-1}(y)\big)
\]
is the length of the $y$-section of $\Omega$. The set $\Omega^*_\nu$ has the same volume of $\Omega$ and is a bit ``nicer'', in particular it is symmetric through the hyperplane orthogonal to $\nu$. It is not difficult to guess that, repeating this symmetrization through a sequence of hyperplanes with properly chosen directions, one obtains a sequence $\Omega_n$ of sets, all with the same measure, which $\gamma$-converge as $n\to\infty$ to a ball. The interest in this symmetrization procedure consists in the fact that along the sequence $\O_n$ several quantities improve, and become asymptotically optimal as $n\to\infty$. In particular we are interested in the following quantities.

$\bullet\ $ The {\it first eigenvalue} $\lambda(\O)$ of the Laplace operator $-\Delta$ with Dirichlet conditions on $\partial\O$, defined as the smallest number $\lambda$ providing a nonzero solution to the PDE
\begin{align*}
-\Delta u=\lambda u\text{ in }\O\,, &&  u\in H^1_0(\O)\,,
\end{align*}
or equivalently through the minimization of the Rayleigh quotient
\[
\lambda(\O)=\min\left\{\Big[\int_\O|\nabla u|^2\,dx\Big]\Big[\int_\O|u|^2\,dx\Big]^{-1}\ :\ u\in H^1_0(\O)\setminus\{0\}\right\}\,.
\]
An important bound for $\lambda(\O)$ is the {\it Faber-Krahn inequality},
\be\label{fbineq}
|\O|^{2/d}\lambda(\O)\ge|B|^{2/d}\lambda(B)
\ee
where $B$ is any ball in $\R^d$.\\
$\bullet\ $ The {\it torsional rigidity} $T(\O)$, defined as $\int_\O u_\O\,dx$, where $u_\O$ is the unique solution of the PDE
\begin{align*}
-\Delta u=1\text{ in }\O\, && u\in H^1_0(\O)\,,
\end{align*}
or equivalently through the maximization problem
\[
T(\O)=\max\left\{\Big[\int_\O u\,dx\Big]^2\Big[\int_\O|\nabla u|^2\,dx\Big]^{-1}\ :\ u\in H^1_0(\O)\setminus\{0\}\right\}\,,
\]
where the maximum is reached by $u_\O$ itself. Also for $T(\O)$ an important inequality is true, that is, the {\it Saint-Venant inequality}
\be\label{svineq}
|\O|^{-(d+2)/d}T(\O)\le|B|^{-(d+2)/d}T(B)
\ee
where $B$ is any ball in $\R^d$. 

The inequalities~(\ref{fbineq}) and~(\ref{svineq}) ensure that balls minimize the first eigenvalue, and maximize the torsional rigidity, among sets of given volume. It is easy to verify that the quantities above fulfill the following scaling properties:
\begin{align*}
\lambda(s\O)=s^{-2}\lambda(\O)\,, && T(\s\O)=s^{d+2}T(\O)\,.
\end{align*}

It is well-known (see for instance~\cite{ALT91}) that the Steiner symmetrization decreases the first eigenvalue and increases the torsional rigidity, that is, for every set $\Omega\subset \R^d$ and direction $\nu\in\S^{d-1}$ one has
\begin{align*}
\lambda(\Omega^*_\nu)\leq \lambda(\Omega)\,, &&
T(\Omega^*_\nu)\geq T(\Omega)\,,
\end{align*}
so that for the sequence $\Omega_n$ defined above one has that $\lambda(\O_n)$ (resp. $T(\O_n)$) decreases (resp., increases) with respect to $n$, and converges to $\lambda(B)$ (resp., $T(B)$), being $B$ any ball with $|B|=|\O|$.

A natural question is whether the discrete approximation can be replaced by a continuous one. More precisely, one would like to have a family $\O_t$, with $t\in[0,1]$, such that $\O_0=\O$, $\O_1=B$ and such that $t\mapsto \lambda(\O_t)$ and $t\mapsto T(\O_t)$ are respectively continuously decreasing and continuously increasing. In addition, the family of sets should be continuous with respect to the $\gamma$-convergence, which is the natural convergence for variational problems, and that we briefly recall in Section~\ref{ssgc}. As described above, successive Steiner symmetrizations allow to pass from a generic set to the ball, hence it is enough to construct a continuous approximation which transforms a set $\Omega$ into its Steiner symmetrization $\Omega^*_\nu$.\par

An explicit construction of a family $\O_t$ transforming the set $\Omega$ into its Steiner symmetrization $\Omega^*_\nu$, called \emph{continuous Steiner symmetrization}, was proposed by Brock in~\cite{bro95}, see also~\cite{bro00}. Previously, other constructions had been proposed, see for instance~\cite{BLL74,K85}. With the Brock construction, that we will briefly describe in Section~\ref{ssbrock}, the quantities $\lambda(\Omega_t)$ and $T(\Omega_t)$ are respectively decreasing and increasing, but they are not continuous, in particular they are both continuous from the left, and respectively upper and lower semicontinuous from the right (see for instance~\cite{BBF99}). The full $\gamma$-continuity of the Brock construction, which implies also the continuity of first eigenvalue and torsional rigidity, only holds on restricted classes of domains, as for instance the class of {\it convex} domains.

On the other hand, a $\gamma$-continuous symmetrization $(\O_t)$ which makes $\lambda(\O_t)$ and $T(\O_t)$ continuously decreasing and increasing would be very useful in several situations. In this paper we show that a simple modification of the Brock construction is enough to define such a symmetrization for the class of polyhedral domains, which are known to be $\gamma$-dense among all domains. Despite the fact that this is a very specific class, the result is enough to prove that the Blaschke-Santal\'o diagram corresponding to the pair $\big(\lambda(\O),T(\O)\big)$ is between two graphs. Several other estimates for various kinds of quantities depending on a domain $\O$ are available in the recent literature; we refer the interested reader to \cite{BB20, BBV15, BBP21, flxx, luzu20} and to references therein.

Let us be more precise. Calling $B$ any ball in $\R^d$, for every domain $\Omega\subset\R^d$ we define the quantities
\begin{align*}
x_\O=\frac{|B|^{2/d}\lambda(B)}{|\O|^{2/d}\lambda(\O)}\,, && y_\O=\frac{|B|^{(d+2)/d}T(\O)}{|\O|^{(d+2)/d}T(B)}\,,
\end{align*}
which are respectively the reciprocal of the first eigenvalue $\lambda(\O)$ and the torsional rigidity $T(\O)$, suitably rescaled so to be in the interval $[0,1]$. The \emph{Blaschke-Santal\'o diagram} is the subset of $\R^2$ given by
\[
E=\left\{(x,y)\in\R^2\ :\ x=x_\O,\ y=y_\O\text{ for some domain }\O\right\}\,.
\]

Our two main results are then the following.

\begin{theo}\label{main}
For every polyhedron $\Omega\subset\R^d$ there exists a $\gamma$-continuous map $[0,1]\ni t \mapsto \Omega_t\subset\R^d$ such that every set $\Omega_t$ has the same measure, $\Omega_0=\Omega$, $\Omega_1$ is a ball, and the quantities $t\mapsto \lambda(\Omega_t)$ and $t\mapsto T(\Omega_t)$ are respectively continuously decreasing and continuously decreasing.
\end{theo}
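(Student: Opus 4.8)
The plan is to build the family $(\Omega_t)$ by concatenating finitely many continuous Steiner symmetrizations in the style of Brock, but arranged so that at each stage we only symmetrize polyhedra and only along coordinate directions for which a good control of the $\gamma$-continuity is available. First I would recall Brock's continuous Steiner symmetrization $M_t$ with respect to a fixed direction $\nu$: it deforms $\Omega$ into $\Omega^*_\nu$ through a one-parameter family $\Omega_t = M_t(\Omega)$ for which $t\mapsto\lambda(\Omega_t)$ is decreasing and $t\mapsto T(\Omega_t)$ is increasing (this monotonicity is exactly what is quoted from \cite{ALT91} and \cite{bro95,bro00} in the introduction). The key point, already emphasized in the excerpt, is that for a \emph{general} domain the map $t\mapsto\Omega_t$ fails to be $\gamma$-continuous — only one-sided continuity and semicontinuity hold. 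So the heart of the argument is to show that, when $\Omega$ is a polyhedron, one can realize the passage $\Omega\rightsquigarrow\Omega^*_\nu$ through a modified family that \emph{is} $\gamma$-continuous, while preserving the monotonicity of $\lambda$ and $T$ and the constancy of the measure.

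The mechanism I would use for the modification is the following. For a polyhedron, each section $\varphi(y)=\H^1(\Omega\cap\pi^{-1}(y))$ is a piecewise-linear function of $y$, and Brock's flow moves the finitely many ``slabs'' of $\Omega$ at constant speeds; the only way $\gamma$-continuity can be lost along the flow is when two previously separated components of a section merge, or when a lower-dimensional face gets created — i.e., when the moving slabs develop contact along a set of positive $(d-1)$-measure in a way that changes the capacity. The fix is to run Brock's flow but, whenever such a degenerate contact is about to occur, to reparametrize time and slightly perturb (e.g. by an arbitrarily small dilation or by a controlled translation within the polyhedral class) so that the contact set is approached continuously in the $\gamma$-sense rather than jumped over; since polyhedra are $\gamma$-dense and $\lambda,T$ are $\gamma$-continuous on them, the perturbation can be chosen small enough not to disturb monotonicity. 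Concretely: (i) decompose $[0,1]$ into finitely many intervals, on each of which Brock's flow with a fixed direction is applied; (ii) choose the directions $\nu_1,\dots,\nu_k$ (coordinate axes suffice in $\R^d$ by iterating Steiner symmetrizations) so that after all $k$ steps the polyhedron has become a ball; (iii) on each subinterval, verify $\gamma$-continuity by checking that the section functions $\varphi_t(y)$ converge in the appropriate sense and that no capacitary mass is lost, using the polyhedral structure to reduce this to a finite, explicit check; (iv) patch the subintervals together, absorbing the intermediate polyhedra as the endpoints of consecutive flows, and finally rescale so that $\Omega_1$ is exactly the ball of the prescribed volume. Monotonicity of $t\mapsto\lambda(\Omega_t)$ and $t\mapsto T(\Omega_t)$ along the whole concatenation follows from the corresponding monotonicity on each Brock subflow together with the fact that a single Steiner symmetrization also decreases $\lambda$ and increases $T$ (so the concatenation points are consistent).

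The main obstacle I expect is step (iii): proving genuine $\gamma$-continuity of the modified flow on each subinterval. The subtlety is that $\gamma$-convergence is equivalent to convergence of the resolvent operators (equivalently, of the torsion functions $u_{\Omega_t}$ in $L^2$), and Brock's flow is not monotone as a family of sets, so one cannot simply invoke monotone-convergence arguments for capacities. For polyhedra, though, one can hope to make this completely explicit: the complement of a thin neighborhood of the finitely many moving hyperplanes is an open set varying continuously in the Hausdorff sense with uniformly bounded perimeter, and on such families $\gamma$-continuity can be extracted from uniform density estimates (a uniform exterior-cone or capacity-density condition that polyhedra satisfy). The delicate case is precisely the ``merging'' instant, where I would need to show that the $\gamma$-limit from the right coincides with the value at the merging time — this is where Brock's construction fails in general and where the polyhedral perturbation must be invoked, choosing it so that the approach to the merged configuration happens through sets whose torsion functions converge. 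I would also need to be careful that the statement as phrased says both $\lambda$ and $T$ are ``continuously decreasing'' — presumably a typo for ``$T$ continuously increasing'' — and prove the latter.
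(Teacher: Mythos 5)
Your plan correctly locates the difficulty (the right-discontinuity of Brock's flow at the instants when sections merge), but the step you yourself flag as the main obstacle is exactly the step the proof has to supply, and the remedy you sketch --- ``reparametrize time and slightly perturb by an arbitrarily small dilation or a controlled translation'' so that the merged configuration is ``approached continuously'' --- is not shown to work and is not really a mechanism: a small dilation or translation of the whole set does not remove the capacitary jump caused by a $(d-1)$-dimensional wall suddenly disappearing, and you give no argument that any such perturbation restores two-sided $\gamma$-continuity while keeping the measure fixed and $\lambda,T$ monotone. The paper's actual device is different and quite concrete. First, for a polyhedron one proves that as $s\searrow t$ the sets $\Omega_s$ $\gamma$-converge to $\Omega_t^+:={\rm Int}\big(\overline{\Omega_t}\big)$: the torsion functions $u_{\Omega_s}$ are bounded in $H^1_0(D)$, their weak limit $\bar u$ lies in $H^1_0(\Omega_t^+)$ \emph{because} $\Omega$ is a polyhedron (this is the one place where polyhedrality is essential; it fails even for smooth sets), and a squeeze using the monotonicity of $T$ and $\Omega_s=(\Omega_t^+)_{s-t}$ forces $\bar u=u_{\Omega_t^+}$. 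Second, since $\Omega_t\subset\Omega_t^+$ have the same measure, the maximum principle gives $d_\gamma(\Omega_t,\Omega_t^+)=T(\Omega_t^+)-T(\Omega_t)$, and for a polyhedron this is nonzero at only finitely many instants $t_1<\dots<t_N$. Third --- and this is the missing idea --- one does not perturb the flow but \emph{inserts a time interval} at each $t_j$ during which the wall $\Omega_{t_j}^+\setminus\Omega_{t_j}$ is shrunk continuously through an increasing family $\Omega_{t_j,\eta}$; since these sets are nested, $d_\gamma(\Omega_{t_j,\eta},\Omega_{t_j,\xi})=T(\Omega_{t_j,\xi})-T(\Omega_{t_j,\eta})$, so the interpolation is automatically $\gamma$-continuous with $\lambda$ decreasing and $T$ increasing. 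Your alternative idea of getting continuity from uniform density or exterior-cone estimates is plausible away from the merging times, but it cannot bridge the merging times themselves, which is the whole point.

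A second, smaller but genuine error: finitely many Steiner symmetrizations along coordinate axes do \emph{not} transform a polyhedron into a ball, so your step (ii) as stated is false. One needs countably many symmetrization steps in suitably chosen directions, with the ball obtained only as the $\gamma$-limit of the resulting sequence; the interval $[0,1]$ is then obtained by reparametrizing a countable concatenation, with $\Omega_1$ defined as that limit ball. (You are right, incidentally, that the second ``continuously decreasing'' in the statement is a typo for ``continuously increasing''.)
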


\begin{theo}\label{bsgraph}
There exists an increasing function $h:[0,1]\to[0,1]$ such that the Blaschke-Santal\'o diagram $E$ coincides with the region of $[0,1]\times[0,1]$ between the two curves
\begin{align*}
y=x^{(d+2)/2} && \text{and} && y=h(x)\,.
\end{align*}
More precisely,
\begin{equation}\label{openincl}
\Big\{\hspace{-2pt}(x,y)\in [0,1]^2:  x^{(d+2)/2}< y < h(x) \hspace{-3pt}\Big\}\hspace{-3pt}\subseteq E \subseteq \hspace{-3pt}\Big\{\hspace{-3pt}(x,y)\in [0,1]^2: x^{(d+2)/2}\leq y \leq h(x) \hspace{-2pt}\Big\}\,.
\end{equation}
In addition, for every $x\in[0,1]$ the function $h$ satisfies 
\begin{equation}\label{estih}
x^{(d+2)/2}\Big(\big[x^{-d/2}]+\big(x^{-d/2}-\big[x^{-d/2}])^{(d+2)/d}\Big)\le h(x)\le\frac{xd(d+2)^2}{2xd+(d+2)\lambda(B)}\,,
\end{equation}
where $[\cdot]$ denotes the integer part, and $B$ is a ball of radius $1$.
\end{theo}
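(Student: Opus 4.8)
The plan is to set $h(x):=\sup\{y_\O:\ x_\O=x\}$ and to establish the following five facts: (i) $y_\O\ge x_\O^{(d+2)/2}$ for every $\O$; (ii) $y_\O$ is bounded above by the right-hand side of~\eqref{estih}, so in particular $h<\infty$; (iii) disjoint unions of balls realise the left-hand side of~\eqref{estih}, as well as points of $E$ arbitrarily close to the curve $y=x^{(d+2)/2}$; (iv) $h$ is non-decreasing and $E$ is $\gamma$-path-connected; (v) each vertical section $E_x:=\{y:\ (x,y)\in E\}$ is an interval. Once these are in hand, (i) and the very definition of $h$ give $E\subseteq\{x^{(d+2)/2}\le y\le h(x)\}$ with $h$ increasing by (iv); while by (iii), (v) and the definition of $h$ each section $E_x$ is an interval with infimum $x^{(d+2)/2}$ and supremum $h(x)$, hence $E_x\supseteq(x^{(d+2)/2},h(x))$ --- precisely the open inclusion in~\eqref{openincl}, the closed one being immediate.

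Point (i) is the classical Kohler--Jobin inequality $\lambda(\O)^{(d+2)/2}T(\O)\ge\lambda(B)^{(d+2)/2}T(B)$: substituting the definitions of $x_\O$ and $y_\O$ it reads exactly $y_\O\,x_\O^{-(d+2)/2}\ge1$. For (ii) I would prove the pointwise estimate
\be\label{sharppolya}
\lambda(\O)\,T(\O)+\frac{T(\O)}{(d+2)\,M_\O}\ \le\ |\O|\,,
\ee
where $M_\O:=\tfrac1{2d}\big(|\O|/|B|\big)^{2/d}$ is the maximum of the torsion function of the ball of volume $|\O|$ (here $B$ is the unit ball). A direct computation --- using $\lambda(\O)=|B|^{2/d}\lambda(B)\,|\O|^{-2/d}x_\O^{-1}$, $T(\O)=|B|^{-(d+2)/d}T(B)\,|\O|^{(d+2)/d}y_\O$ and $|B|/T(B)=d(d+2)$ --- rewrites~\eqref{sharppolya} as $y_\O\le x_\O d(d+2)^2\big(2x_\O d+(d+2)\lambda(B)\big)^{-1}$, i.e.\ the right-hand bound in~\eqref{estih}; as $x_\O\to0$ it degenerates into P\'olya's inequality $\lambda(\O)T(\O)\le|\O|$. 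To establish~\eqref{sharppolya} I would use the torsion function $u=u_\O$ as a test function in the Rayleigh quotient defining $\lambda(\O)$, obtaining $\lambda(\O)\int_\O u^2\le\int_\O|\nabla u|^2=\int_\O u=T(\O)$, and combine it with Talenti's symmetrisation comparison --- which yields $\|u\|_\infty\le M_\O$ and, more precisely, forces the distribution function $\mu(t)=|\{u>t\}|$ to stay below that of the ball of volume $|\O|$ --- and with the Saint-Venant inequality~\eqref{svineq}. The ensuing optimisation over admissible profiles $\mu$, and above all the sharpness of the resulting constant, is where I expect the main analytic difficulty to lie.

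For (iii), let $B_{r_1},\dots,B_{r_N}$ be pairwise disjoint balls with $r_1=\max_ir_i$; then, by the scaling of $\lambda$ and $T$, one has $\lambda(\O)=\lambda(B_{r_1})$, $T(\O)=\sum_iT(B_{r_i})$, $|\O|=\sum_i|B_{r_i}|$. Writing $a_i=(r_i/r_1)^d\in(0,1]$ and $S=\sum_ia_i$, this gives $x_\O=S^{-2/d}$ and $y_\O=x_\O^{(d+2)/2}\sum_ia_i^{(d+2)/d}$. Since $t\mapsto t^{(d+2)/d}$ is strictly convex on $[0,1]$, for $x$ fixed (hence $S=x^{-d/2}$ fixed) the sum $\sum_ia_i^{(d+2)/d}$ is maximised by taking $[S]$ of the $a_i$ equal to $1$ and the last one equal to $S-[S]$, which yields precisely the left-hand side of~\eqref{estih}; at the opposite extreme, using many small balls pushes $\sum_ia_i^{(d+2)/d}$ down to $1$, so $(x_\O,y_\O)$ towards $(x,x^{(d+2)/2})$. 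By continuity of this finite-dimensional family, for each fixed $x\in(0,1)$ the whole half-open segment joining $(x,x^{(d+2)/2})$ (excluded) to $\big(x,\gamma(x)\big)$ (included), $\gamma(x)$ being the left-hand side of~\eqref{estih}, is contained in $E$; in particular $\gamma(x)\le h(x)$, the left-hand bound of~\eqref{estih}. One makes these disjoint unions into connected admissible domains by joining the balls with vanishingly thin channels, which perturbs $\lambda$, $T$ and $|\O|$ only negligibly.

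Facts (iv) and (v) rest on Theorem~\ref{main}. Applied to a polyhedron $\O$, it provides a path $\O_t$ with $|\O_t|\equiv|\O|$, $\lambda(\O_t)$ decreasing and $T(\O_t)$ increasing, hence $x_{\O_t}$ and $y_{\O_t}$ both continuously increasing, from $(x_\O,y_\O)$ to $(1,1)$. So if $x_1<x_2\le1$ and $\O$ is a polyhedron with $x_\O$ slightly below $x_1$, the path attains the level $x=x_2$ at a point with $y\ge y_\O$, whence $h(x_2)\ge y_\O$; letting $\O$ range over polyhedra (which are $\gamma$-dense) and using the $\gamma$-continuity of $x_\O,y_\O$, we conclude $h(x_2)\ge h(x_1)$, so $h$ is non-decreasing. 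The same paths, together with dilations joining all balls, make $E$ $\gamma$-path-connected. Finally, for (v), starting from a domain realising some $(x,y_2)\in E$ one deforms it within the $\gamma$-topology, keeping its first coordinate equal to $x$ and letting the second decrease continuously, until it meets the segment supplied by (iii); combined with (iii), this yields $E_x\supseteq(x^{(d+2)/2},h(x))$. Constructing such a deformation --- in particular verifying that the first coordinate can be kept fixed all along --- is, together with the sharp form of~\eqref{sharppolya}, where the real difficulty of the proof is concentrated.
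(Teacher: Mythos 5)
Your overall architecture (Kohler--Jobin for the lower boundary, the refined P\'olya inequality of van den Berg--Ferone--Nitsch--Trombetti for the upper bound on $h$, disjoint unions of balls for the lower bound on $h$, and the Steiner paths of Theorem~\ref{main} for monotonicity) matches the paper's, and steps (i), (iii) and (iv) are essentially what the paper does (the paper simply cites \cite{bfnt16} for the inequality you try to re-derive in (ii), and \cite{BBP20} for the ball computation in (iii); your attempt to reprove the former via Talenti is left unfinished, but citation would suffice). The genuine gap is your step (v). You reduce everything to producing, from a domain realising $(x,y_2)\in E$, a $\gamma$-continuous deformation that \emph{keeps the first coordinate fixed} while the second decreases --- and you yourself flag that you do not know how to build it. No such deformation is constructed in the paper, and it is not needed: the missing idea is the splitting construction of Lemma~\ref{ncurves}. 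Given any $\O$ realising $(x_0,y_0)$, one replaces it by the disjoint union of one copy $(1-a_n\sigma)\O$ and $n-1$ small rescaled copies of $\O$ (not of balls), of total measure $|\O|$. This yields, for each $n$, an explicit continuous curve in $E$ starting at $(x_0,y_0)$ and going \emph{down and to the left}, and these curves converge uniformly to the graph of $y=y_0(x/x_0)^{(d+2)/2}$ joining $(x_0,y_0)$ to the origin.

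With that lemma the paper fills the region by a two-parameter sweep rather than by vertical sections: for a polyhedron, the Steiner path $\varphi(t)=(x_t,y_t)$ increases in both coordinates from $(x_0,y_0)$ to $(1,1)$, and from \emph{each} point $(x_t,y_t)$ of this path the Lemma~\ref{ncurves} curves descend arbitrarily close to $y=y_t(x/x_t)^{(d+2)/2}$; a planar continuity (intermediate-value) argument then shows that every $(x,y)$ with $x_0<x<1$ and $x^{(d+2)/2}<y<y_0$ lies on one of these curves, hence in $E$. This ``downward and rightward convexity'' immediately yields both the existence of the increasing $h$ and the open inclusion in~\eqref{openincl}, after passing from polyhedra to general domains by $\gamma$-density. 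Your disjoint-balls family in (iii) only reaches the sub-region below the curve $\gamma(x)$ on the left of~\eqref{estih}, which can be strictly below $h(x)$, so it cannot substitute for the splitting lemma. As written, your proposal therefore does not close: the point you correctly identify as ``where the real difficulty is concentrated'' is exactly the step for which you have no construction, and the construction that works is of a different nature from the one you propose.
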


The approach we use to obtain Theorem~\ref{bsgraph} is rather general. Namely, we show that $E$ is ``downward and rightward convex''. More precisely, for every $(x_0,y_0)\in E$ we prove that all the points $(x,y)\in (x_0,1)\times (0,y_0)$ with $y \geq x^{(d+2)/2}$ belong to $E$. In the proof of this convexity property the $\gamma$-continuous Steiner symmetrization for polyhedra is crucial and the characterization of the structure of the set $E$ could be of great help in the analysis of several shape optimization problems. We briefly discuss the limit cases in the inclusions~(\ref{openincl}) in the final Remark~\ref{remoi}.

The plan of the paper is the following. In Section~\ref{ssgc} and in Section~\ref{ssbrock} we quickly describe the $\gamma$-convergence and the continuous Steiner symmetrization of Brock. Then, in Section~\ref{pfmain} and in Section~\ref{ssantalo} we prove respectively Theorem~\ref{main} and Theorem~\ref{bsgraph}.


\section{The $\gamma$-convergence\label{ssgc}}

In this section we recall the definition of $\gamma$-convergence, together with its main properties. For a more detailed analysis we refer to the book~\cite{bubu05}. For simplicity we always assume that all the domains we consider are contained in a fixed bounded set $D\subset\R^d$, which makes no difference for our purposes.

\begin{defi}\label{dgamma}
We say that a sequence $\{\O_n\}$ of open sets $\gamma$-converges to the open set $\O$ if for every right-hand side $f\in H^{-1}(D)$ the solutions $u_n$ of the PDEs
\begin{align*}
-\Delta u_n=f\text{ in }\O_n\,, && u_n\in H^1_0(\O_n)\,,
\end{align*}
each extended by zero on $D\setminus\O_n$, converge weakly in $H^1_0(D)$ to the solution $u$ of
\begin{align*}
-\Delta u=f\text{ in }\O\,, && u\in H^1_0(\O)\,.
\end{align*}
\end{defi}

We summarize here below the main properties of the $\gamma$-convergence. We refer to~\cite{bubu05} for all the details, properties, and proofs. 

\begin{enumerate}[label=(\arabic*)]

\item The $\gamma$-convergence can be defined in a similar way for {\it quasi-open} sets $\O\subset D$ or more generally for {\it capacitary measures} $\mu$ confined into $D$ (that is $\mu=+\infty$ outside $D$). For a capacitary measure $\mu$ the corresponding PDE is written as
\begin{align*}
-\Delta u+\mu u=f\text{ in }D\,, && u\in H^1_0(D)\cap L^2_\mu(D)\,,
\end{align*}
and has to be intended it in the weak sense, that is, $u\in H^1_0(D)\cap L^2_\mu(D)$ and
\begin{align*}
\int_D\nabla u\nabla\phi\,dx+\int_D u\phi\,d\mu=\langle f,\phi\rangle && \forall\phi\in H^1_0(D)\cap L^2_\mu(D)\,.
\end{align*}

\item The space $\M$ of capacitary measures above, endowed with the $\gamma$-convergence, is a compact space.

\item Open sets or more generally quasi-open sets belong to $\M$; for a given domain $\O$ the element of $\M$ representing it is the measure defined for all Borel sets $E\subset D$ as
\[
\infty_{\O^c}(E)=\begin{cases}
0&\text{if }\cp(E\cap\O)=0\\
+\infty&\text{otherwise.}
\end{cases}
\]

\item In Definition~\ref{dgamma} requiring the convergence of the solutions $u_n$ to $u$ for every right-hand side $f$ is equivalent to require the convergence $u_n\to u$ only for $f\equiv1$ and in the $L^2(D)$ sense. In particular, calling $u_\mu$ the solution of the PDE $-\Delta u + \mu u =1$ in $H^1_0(D)\cap L^2_\mu(D)$, the quantity
\[
d_\gamma(\mu_1,\mu_2)=\|u_{\mu_1}-u_{\mu_2}\|_{L^2(D)}
\]
is a distance on the space $\M$ of capacitary measures, which is equivalent to $\gamma$-convergence, and so $\M$ endowed with the distance $d_\gamma$ is a compact metric space.

\item Several subclasses of $\M$ are dense with respect to the $\gamma$-convergence. For instance:
\begin{enumerate}[label=(\roman*)]
\item the class of measures $a(x)\,dx$ with $a\ge0$ and smooth;
\item the class of smooth domains $\O\subset D$;
\item the class of polyedral domains $\O\subset D$.
\end{enumerate}
\item The first eigenvalue $\lambda(\O)$ (as well as all the other eigenvalues $\lambda_k(\O)$) and the torsional rigidity $T(\O)$ are continuous with respect to the $\gamma$-convergence.
\end{enumerate}


\section{The continuous Steiner symmetrization\label{ssbrock}}

In this section we describe the continuous Steiner symmetrization studied by Brock in~\cite{bro95,bro00}. As described in the introduction, this is a path of open sets $\Omega_t$ which start from a given open set $\Omega_0=\Omega$ and end with the Steiner symmetral $\Omega_\infty=\Omega^*_\nu$ of $\Omega$ with respect to a given direction $\nu\in\S^{d-1}$. In this construction the variable $t$ ranges from $0$ to $+\infty$, while in Theorem~\ref{main} we preferred to use $t\in [0,1]$, this is clearly only a matter of taste and does not make any real difference.

In order to describe this symmetrization, the important issue is to discuss the one-dimensional case. Let us start assuming that $\Omega=(a,b)$ is an open segment in $\R$. In this case, for every $t$ the set $\Omega_t$ is again a segment $(a_t,b_t)$ of length $b_t-a_t=b-a$, which moves towards right with velocity $(b_t+a_t)/2$. In other words, the position of the barycenter $m_t=(b_t+a_t)/2$ is given by $e^{-t} m_0$, and in particular $\Omega_\infty=\big(-(a+b)/2,(a+b)/2\big)$ is the Steiner symmetral of $\Omega$.\par

Let us now assume that $\Omega\subseteq\R$ is given by a finite union of open segments with disjoint closures. In this case, for small $t$ each of the segments moves according with the above rule. There is then a smallest time $t_1>0$ when two consecutive segments meet, so in particular $\Omega_{t_1}$ is given by a finite union of segments, and (at least) two of them have a common endpoint. Let us call $\Omega_{t_1}^+ ={\rm Int}\big(\overline{\Omega_{t_1}}\big)$, that is, we add to the set $\Omega_{t_1}$ the common endpoints. The set $\Omega_{t_1}^+$ is then a finite union of open segments with disjoint closures, and for $t>t_1$ with small difference $t-t_1$ we can define $\Omega_t = \big(\Omega_{t_1}^+\big)_{t-t_1}$. Again, there is a smallest time $t_2>t_1$ when two consecutive segments meet, and so on. After a finite number of junctions, the set $\Omega_t$ is then remained a single segment, and then we leave it evolve to the symmetric segment $\Omega_\infty$ as already described.\par

As shown by Brock, there is a general rule which works for all the open subsets of $\R$, and which reduces to the one depicted above in the case of finitely many segments.\par

The construction in $\R^d$ is basically one-dimensional. Calling, for every $y\in \R^d$ orthogonal to the direction $\nu$, $\Omega^y$ the $y$-section of $\Omega$, made by all points $x$ of $\Omega$ such that $y-x$ is parallel to $\nu$, one simply defines $\Omega_t$ the set such that, for every $y$, $(\Omega_t)^y=(\Omega^y)_t$. As shown in~\cite{bro95,bro00,BBF99,BH00}, the family of sets $\Omega_t$ has various properties. They are all sets with the same measure, being $\Omega_0=\Omega$ and $\Omega_\infty=\Omega^*_\nu$. In addition, the first eigenvalue $\lambda(\Omega_t)$ and the torsional rigidity $T(\Omega_t)$ are respectively decreasing and increasing with respect to $t$. More precisely, they are both continuous from the left, and they can have jumps from the right. One can say even more, that is, if $s\nearrow t$ then the sets $\Omega_s$ are $\gamma$-converging to $\Omega_t$.\par

\begin{figure}[htbp]
\begin{tikzpicture}
\fill[blue!25!white] (0,5) -- (1,5) -- (1,3) arc(180:360:.5) -- (2,5) -- (3,5) -- (3,3) arc(360:180:1.5) -- (0,5);
\draw[line width=1] (0,5) -- (1,5) -- (1,3) arc(180:360:.5) -- (2,5) -- (3,5) -- (3,3) arc(360:180:1.5) -- (0,5);
\draw (1.5,1.3) node[anchor=north] {$\Omega_0$};
\fill[blue!25!white] (4.75,5) -- (5.75,5) -- (5.75,3) arc(180:225:.5) arc(315:360:.5) -- (6.05,5) -- (7.05,5) -- (7.05,3) arc(360:346:1.5) .. controls (7.2,2.6) .. (7.3,2.5) arc (340:200:1.5) .. controls (4.67,2.6) .. (4.79,2.65) arc (194:180:1.5) -- (4.75,5);
\draw[line width=1] (4.75,5) -- (5.75,5) -- (5.75,3) arc(180:225:.5) arc(315:360:.5) -- (6.05,5) -- (7.05,5) -- (7.05,3) arc(360:346:1.5) .. controls (7.2,2.6) .. (7.3,2.5) arc (340:200:1.5) .. controls (4.67,2.6) .. (4.79,2.65) arc (194:180:1.5) -- (4.75,5);
\draw (5.9,1.3) node[anchor=north] {$\Omega_\sigma$};
\draw (10.3,1.3) node[anchor=north] {$\Omega_\tau$};
\fill[blue!25!white] (9.3,5) --(10.3,5) -- (10.3,3) -- (10.3,5) -- (11.3,5) -- (11.3,3) .. controls (11.5,2.85) ..  (11.7,2.5) arc (340:200:1.5) .. controls (9.1,2.85) .. (9.3,3) -- (9.3,5);
\draw[line width=1] (9.3,5) --(10.3,5) -- (10.3,3) -- (10.3,5) -- (11.3,5) -- (11.3,3) .. controls (11.5,2.85) ..  (11.7,2.5) arc (340:200:1.5) .. controls (9.1,2.85) .. (9.3,3) -- (9.3,5);
\draw[dashed] (-.5,2.5)-- (12,2.5);
\draw[dashed] (-.5,2.65)-- (12,2.65);
\draw[dashed] (-.5,3)-- (12,3);
\draw (-0.5,2.5) node[anchor=north east] {$y_0$};
\draw (-0.5,2.75) node[anchor=east] {$y_\sigma$};
\draw (-0.5,2.95) node[anchor=south east] {$y_\tau$};
\end{tikzpicture}
\caption{A set $\Omega$ such that $t\mapsto \lambda(\Omega_t)$ is discontinuous.}\label{Figdisc}
\end{figure}
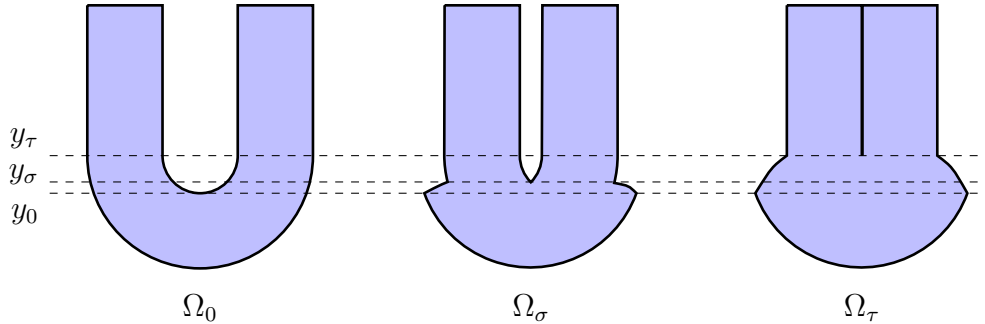
The reason why the sets behave badly if $s\searrow t$ can be easily understood with an example. Let us assume that $\Omega=\Omega_0$ has a U-shape as in Figure~\ref{Figdisc}, and that $\nu$ is the horizontal vector. The set $\Omega$ already coincides with $\Omega^*_\nu$ below a height $y_0$, hence for every $t>0$ the sets $\Omega_0,\, \Omega_t$ and $\Omega_\infty=\Omega^*_\nu$ coincide below this height. For a small time $\sigma>0$, the two ``legs'' of $\Omega$ have become closer, and they have already met below a height $y_\sigma$, hence below this height all the sets $\Omega_t$ coincide for $t>\sigma$. There is then a particular time $\tau$ when the two internal, vertical segments in the boundary of $\Omega_\tau$ coincide. Notice that the set $\Omega^+_\tau$ defined above consists in the set $\Omega_\tau$ together with the internal, vertical segment, and actually $\Omega_t=\Omega^+_\tau=\Omega_\infty=\Omega^*_\nu$ for every $t>\tau$. It is obvious that the functions $t\mapsto\lambda(\Omega_t)$ and $t\mapsto T(\Omega_t)$ are continuous for $0\leq t\leq\tau$, and according with Brock's result they are also respectively decreasing and increasing. After the time $\tau$, instead, since the vertical segment suddenly disappears, there is clearly a jump in both functions.


\section{The case of the polyhedra\label{pfmain}}

This section is devoted to consider the case of polyhedra, and to show Theorem~\ref{main}. The idea is simple; if $\Omega_0$ is a polyhedron then, similarly to what happens in the example considered in Figure~\ref{Figdisc}, the path $t\mapsto \Omega_t$ is already $\gamma$-continuous, except at finitely many instants where a $(d-1)$-dimensional wall suddenly disappears. It is then sufficient to modify the construction letting these ``walls'' smoothly disappear in a positive time, gaining then the $\gamma$-continuity.

\begin{proof}[Proof (of Theorem~\ref{main})]
Let $\Omega\subset\R^d$ be a polyhedron, and let $\nu\in\S^{d-1}$ be a given direction. Notice that also the set $\Omega^*_\nu$ is a polyhedron. As already said in the introduction, for every open set $A$ compactly contained in $D$ we call $u_A$ the torsion function, i.e., the unique solution of the PDE $-\Delta u =1$ in $H^1_0$, extended by $0$ in $D\setminus A$. Moreover, for every $t>0$, we define $\Omega_t^+={\rm Int}\big(\overline{\Omega_t}\big)$.\par

Let $t\geq 0$ be any positive number, and let $s_n\searrow t$ be a sequence converging to $t$ from above. The functions $u_{\Omega_n}$ form a bounded sequence in $H^1_0(D)$, hence a subsequence converges to some function $\bar u$ weakly in $H^1_0(D)$, so in particular strongly in $L^2(D)$. It is simple to observe that, since $\Omega$ is a polyhedron, $\bar u$ belongs to $H^1_0(\Omega_t^+)$. Here the assumption that $\Omega$ is a polyhedron is essential, since examples show that this assertion is in general false, even with the assumption that $\Omega$ is a smooth open set! Notice that
\[
T(\Omega_t^+) \geq \frac{\bal\bigg(\int\bar u\,dx\bigg)^2\eal}{\bal\int|\nabla\bar u|^2\,dx\eal} \geq \limsup_{n\to\infty} \frac{\bal\bigg(\int u_n\,dx\bigg)^2\eal}{\bal\int|\nabla u_n|^2\,dx\eal}
=\limsup_{n\to\infty} T(\Omega_n) \geq T(\Omega_t^+)\,.
\]
The last inequality is true because the torsional rigidity increases with time and, by definition, for every $s>t$ one has that $\Omega_s=(\Omega_t)_{s-t}=(\Omega_t^+)_{s-t}$. By the above chain of inequalities, and by the uniqueness of the torsion function, we deduce that $\bar u=u_{\Omega_t^+}$. Therefore, since the convergence of $u_{\Omega_n}$ to $u_\Omega=\bar u$ is strong also in $L^1$, we deduce that the sets $\Omega_s$, when $s\searrow t$, $\gamma$-converge to $\Omega_t^+$.\par

Observe that, by construction, $\Omega_t$ is an open set contained in $\Omega_t^+$. Moreover, they have the same measure thanks to Fubini Theorem, since for every $y \in \nu^\perp$ the difference $(\Omega_t^+\setminus \Omega_t)^y$ has only finitely many points. Therefore, by the maximum principle we have $u_{\Omega_t}\leq u_{\Omega_t^+}$, thus
\[
d_\gamma(\Omega_t,\Omega_t^+) = \|u_{\Omega_t}-u_{\Omega_t^+}\|_{L^1} = \int\big(u_{\Omega_t^+}-u_{\Omega_t}\big)\,dx = T(\Omega_t^+)-T(\Omega_t).
\]
Again using the fact that $\Omega$ is a polyhedron, there can be at most finitely many instants $t_1<t_2< \cdots < t_N$ such that the above difference is strictly positive, thus the path $t\mapsto \Omega_t$ is already $\gamma$-continuos in $\R\setminus \{t_1,\,t_2,\, \dots\, , \, t_N\}$.\par

Let now $t$ be any of the instants $t_j$. For every $0\leq \eta\leq 1$ we can define a set $\Omega_{t,\eta}$, ranging from $\Omega_{t,0}=\Omega_t$ to $\Omega_{t,1}=\Omega_t^+$. The sets $\Omega_{t,\eta}$ are defined continuously increasing, i.e., continuously shrinking the ``wall'' $\Omega_t^+\setminus \Omega_t$. Since for every $\eta< \xi$ we have $\Omega_{t,\eta}\subset \Omega_{t,\xi}$, then as before we obtain
\[
d_\gamma(\Omega_{t,\eta},\Omega_{t,\xi}) = T(\Omega_{t,\xi})-T(\Omega_{t,\eta}).
\]
Therefore, the path $\eta\mapsto \Omega_{t,\eta}$ is $\gamma$-continuous. Since the sets are increasing, then the first eigenvalue and the torsional rigidity are respectively decreasing and increasing, in a continuous way since both quantities are $\gamma$-continuous.\par

It is now clear how to modify the definition of the sets $\Omega_t$, replacing every instant $\{t_j\}$ with a closed time interval of width $1$, in such a way the map $[0,+\infty]\ni t\mapsto \Omega_t$ is a $\gamma$-continuous path between $\Omega$ and $\Omega^*_\nu$ and the first eigenvalue and the torsional rigidity are monotone (respectively decreasing and increasing) and continuous.\par

It is then sufficient to perform the same construction countably many times in different directions, so to eventually obtain a family of sets that $\gamma$-converge to a ball. By reparametrizing the variable $t$, we can let it vary in the closed interval $[0,1]$.
\end{proof}

\section{Application to the Blaschke-Santal\'o diagram\label{ssantalo}}

The study of Blaschke-Santal\'o diagrams is a very powerful way to treat shape optimization problems, which are in general rather difficult to attack because the class of admissible shapes do not have strong functional properties and very often limits of sequences of shapes (in particular $\gamma$-limits) are not shapes any more. If $A(\O)$ and $B(\O)$ are two shape functionals (a similar argument can be used for a larger number of them) many shape optimization problems can be written in the form
\be\label{shopt}
\min\big\{F\big(A(\O),B(\O)\big)\ :\ |\O|=m\big\},
\ee
where the Lebesgue measure constraint is very natural in this kind of problems. Sometimes, the presence of additional geometric constraints (as for instance convexity of admissible shapes or other geometric bounds a priori imposed) makes the above problem easier, since extra compactness properties can be deduced. When the quantities $A(\O)$ and $B(\O)$ fulfill suitable scaling relations as
\begin{align*}
A(t\O)=t^\alpha A(\O)\,,&& B(t\O)=t^\beta B(\O)\,,
\end{align*}
and if the function $F$ is expressed through powers, as
\[
F(A,B)=A^p B^q\,,
\]
the Lebesgue measure constraint $|\O|=m$ can be incorporated in the {\it scaling free} functional
\[
\F(\O)=\frac{A^p(\O) B^q(\O)}{|\O|^{(\alpha p+\beta q)/d}}=\left(\frac{A(\O)}{|\O|^{\alpha/d}}\right)^p\left(\frac{B(\O)}{|\O|^{\beta/d}}\right)^q\,,
\]
and the minimum problem above can be reformulated as the minimum problem for $\F$ without any Lebesgue measure constraint.

The Blaschke-Santal\'o diagram for the pair $A(\O)$, $B(\O)$ is the subset of the Euclidean space $\R^2$ given by
\[
E=\left\{(x,y)\in\R^2\ :\ x=\frac{A(\O)}{|\O|^{\alpha/d}},\ y=\frac{B(\O)}{|\O|^{\beta/d}}\text{ for some }\O\right\}\,.
\]
In this way our shape optimization problem~\eqref{shopt} can be reduced to the optimization problem on $\R^2$ given by
\[
\min\big\{F(x,y)\ :\ (x,y)\in E\big\}\,.
\]
In general the full characterization of the Blaschke-Santal\'o diagram $E$ is a difficult problem and often only some bounds can be obtained. In the present paper we consider the quantities $\lambda(\O)$ and $T(\O)$ and we try to identify the set $E$ in this case. In order to have the set $E$ included in the square $[0,1]\times[0,1]$ it is convenient to take the rescaled variables
\begin{align}\label{xy}
x=\frac{|B|^{2/d}\lambda(B)}{|\O|^{2/d}\lambda(\O)}\,, && y=\frac{|B|^{(d+2)/d}T(\O)}{|\O|^{(d+2)/d}T(B)}\,,
\end{align}
being $B$ a ball of radius $1$. In this way the Kohler-Jobin inequality (see for instance~\cite{BBP20})
\begin{equation}\label{kji}
\lambda(\O)T^{2/(d+2)}(\O)\ge\lambda(B)T^{2/(d+2)}(B)
\end{equation}
becomes, in the $x,y$ variables,
\be\label{kjxy}
y\ge x^{(d+2)/2}\,.
\ee
Instead, the Polya inequality $\lambda(\O)T(\O)<|\O|$ (see~\cite{BBP20}) becomes
\[
y<\frac{|B|}{\lambda(B)T(B)}\, x\,.
\]
A slight improvement of this inequality has been obtained in~\cite{bfnt16}, where it is proved that
\[
\lambda(\O)T(\O)\leq |\O| \bigg(1-\frac{2d|B|^{2/d}}{d+2}\frac{T(\O)}{|\O|^{(d+2)/d}}\bigg)\,,
\]
which, by~(\ref{xy}) and since a simple calculation ensures $T(B)=\omega_d/\big(d(d+2)\big)$, gives
\be\label{bfntxy}
y\le \frac{|B| x}{\lambda(B)T(B)}\left(1-\frac{2xd}{2xd+(d+2)\lambda(B)}\right)\,,
\ee
In Figure~\ref{fig1} we plot the bounds~(\ref{kjxy}) and~(\ref{bfntxy}) in the case of dimension two, which are
\[
x^2\le y\le\frac{8x}{x+j_0^2}\,,
\]
being $j_0=2.4048\dots$ the first zero of the Bessel function $J_0$.

\begin{figure}[htbp]
\centering{\includegraphics[scale=1.2]{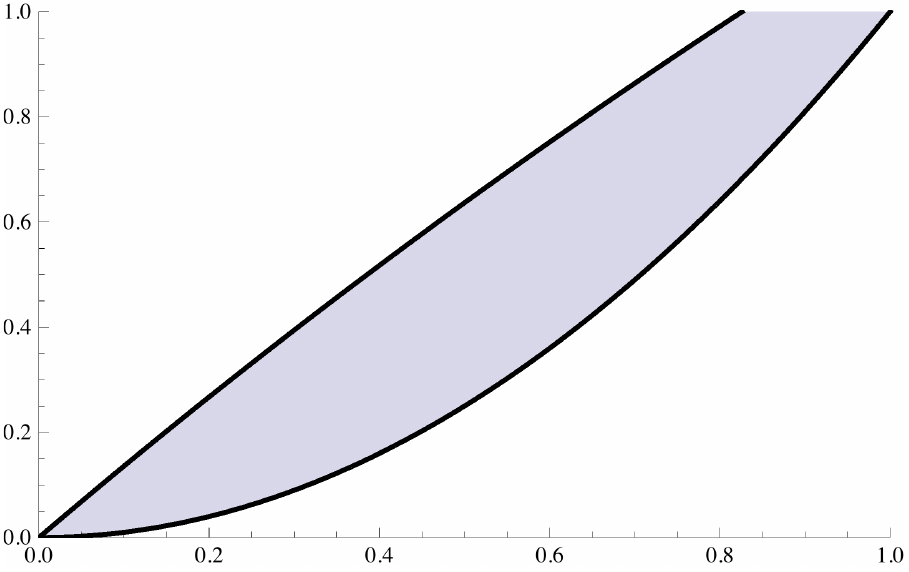}}
\caption{The colored region, obtained by the inequalities~\eqref{kjxy} and~\eqref{bfntxy}, contains the Blaschke-Santal\'o diagram $E$ for $\lambda(\O)$ and $T(\O)$ in the case $d=2$.}\label{fig1}
\end{figure}

We start to study some properties of the Blaschke-Santal\'o diagram $E$.

\begin{lemm}\label{ncurves}
For every $(x_0,y_0)\in E$ there exists a sequence of continuous curves $\big(x_n(\sigma),y_n(\sigma)\big)$ in $E$, with $\sigma\in[0,1]$, such that $\big(x_n(0),y_n(0)\big)=(x_0,y_0)$, converging uniformly to the curve
\begin{align*}
x(\sigma)=(1-\sigma)^2x_0\,, && y(\sigma)=(1-\sigma)^{d+2}y_0\,,&& \sigma\in[0,1]
\end{align*}
which connects the point $(x_0,y_0)$ with the origin. In Cartesian coordinates the limit curve is the graph of the function
\begin{align*}
y=y_0(x/x_0)^{(d+2)/2} && x\in[0,x_0]\,.
\end{align*}
\end{lemm}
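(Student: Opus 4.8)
\textbf{Proof proposal for Lemma~\ref{ncurves}.}

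The plan is to apply the $\gamma$-continuous Steiner symmetrization from Theorem~\ref{main} to a polyhedral approximation of an optimal domain, and then track what happens under rescaling. First I would fix a domain $\O$ realizing (or nearly realizing) the point $(x_0,y_0)$, i.e.\ with $x_\O = x_0$ and $y_\O = y_0$. Since polyhedra are $\gamma$-dense (property (5)(iii) in Section~\ref{ssgc}) and since $\lambda$ and $T$ are $\gamma$-continuous (property (6)), I may pick a sequence of polyhedra $P_n$ with $(x_{P_n}, y_{P_n}) \to (x_0, y_0)$. For each $P_n$, Theorem~\ref{main} furnishes a $\gamma$-continuous path $t \mapsto (P_n)_t$, $t\in[0,1]$, of sets of fixed measure with $(P_n)_0 = P_n$ and $(P_n)_1$ a ball, along which $\lambda$ decreases continuously and $T$ increases continuously. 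Composing with the $\gamma$-continuity of $\lambda$ and $T$, the curve $t \mapsto \big(x_{(P_n)_t}, y_{(P_n)_t}\big)$ is a continuous curve in $E$ running from $(x_{P_n}, y_{P_n})$ to the point $(1,1)$ (the ball).

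That gives curves inside $E$ going \emph{up and to the right}; the statement, however, wants a curve going \emph{down to the origin} along $y = y_0 (x/x_0)^{(d+2)/2}$. The key observation is the scaling: for $s>0$, the set $s\O$ has $\lambda(s\O) = s^{-2}\lambda(\O)$ and $T(s\O) = s^{d+2}T(\O)$, while $|s\O| = s^d|\O|$; plugging into~\eqref{xy}, one checks that $x_{s\O} = x_\O$ and $y_{s\O} = y_\O$ — rescaling moves nothing. So pure dilation is useless, and instead I would use the path in the following way: take the point $(x_0,y_0)$ and, for each $\sigma$, I want a domain with $x$-coordinate $(1-\sigma)^2 x_0$ and $y$-coordinate $(1-\sigma)^{d+2} y_0$. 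The trick is to run the symmetrization path \emph{backwards in a relative sense}: along $t\mapsto(P_n)_t$ one can stop at any intermediate time and then dilate. Concretely, from a domain with coordinates $(x_1, y_1)$ on the path, rescaling by $s$ keeps $(x_1, y_1)$ fixed, so that is not enough either; what actually produces the target curve is to combine a point of the path with the correct dilation so that the \emph{unrescaled} quantities $\lambda$ and $T$ of a fixed-volume representative trace out the parametrization. Let me restate this cleanly: on the path for $P_n$, at parameter $t$ we have the rescaled pair $\big(x_{(P_n)_t}, y_{(P_n)_t}\big)$; denote it $\big(\xi_n(t), \eta_n(t)\big)$, a continuous curve from $\big(\xi_n(0),\eta_n(0)\big)\approx(x_0,y_0)$ to $(1,1)$. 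I claim the curve we want is obtained not from the forward path but by a different device: given the point $(x_0, y_0)\in E$, build the domain $\O_\sigma$ which is a \emph{disjoint union of $\O$ with $k$ copies of a ball}, choosing $k$ and the ball radius so that $\big(x_{\O_\sigma}, y_{\O_\sigma}\big)$ moves along the quoted curve. Indeed, adding small balls increases the volume and adds torsion while the first eigenvalue stays $\lambda(\O)$ (if the balls are large enough it changes, so one must keep the balls of the right size), and after rescaling back to unit-normalization one sweeps out exactly $y = y_0(x/x_0)^{(d+2)/2}$.

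Here is the precise mechanism I expect the authors use, which I would follow. Given $(x_0,y_0) \in E$ realized by $\O$ with $|\O| = 1$ (WLOG), form $\O_s := \O \sqcup B_{r(s)}$, a disjoint union with a ball of radius $r(s)$, where $r(s)$ grows from $0$. One has $\lambda(\O_s) = \min\{\lambda(\O), \lambda(B_{r(s)})\}$, which equals $\lambda(\O)$ as long as $r(s)$ is not too large, and $T(\O_s) = T(\O) + T(B_{r(s)})$, while $|\O_s| = 1 + |B_{r(s)}|$. Feeding this into~\eqref{xy}, one computes $x_{\O_s}$ and $y_{\O_s}$ as explicit decreasing functions of $r(s)$, and after a change of variables to the arclength-type parameter $\sigma$ with $r$ chosen so that $|\O_s|^{-1/d}$ scales like $(1-\sigma)$, these become $x_{\O_s} = (1-\sigma)^2 x_0$ and $y_{\O_s} = (1-\sigma)^{d+2} y_0 + (\text{torsion contribution of the ball})$. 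To kill the extra torsion term and hit the curve exactly in the limit, replace the single ball by $n$ equal small balls and let $n\to\infty$ keeping the total added volume fixed: since $T$ of a ball of radius $\rho$ scales like $\rho^{d+2} = (\text{volume})^{(d+2)/d}$, splitting a fixed volume into $n$ pieces makes the total added torsion $\sim n \cdot (c/n)^{(d+2)/d} \to 0$. Thus the $n$-th curve $\sigma\mapsto\big(x_{\O^{(n)}_\sigma}, y_{\O^{(n)}_\sigma}\big)$ lies in $E$, starts (for $n$ also absorbing the polyhedral approximation, or just directly since disjoint unions of a domain and balls are admissible) at $(x_0,y_0)$, and converges uniformly to the stated limit curve; the Cartesian form $y = y_0(x/x_0)^{(d+2)/2}$ follows by eliminating $\sigma$ from $x = (1-\sigma)^2 x_0$, $y = (1-\sigma)^{d+2}y_0$. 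The main obstacle, and the point requiring care, is verifying that $\lambda(\O^{(n)}_\sigma) = \lambda(\O)$ throughout — that is, that the added balls never become the dominant (smallest-eigenvalue) component over the relevant range of $\sigma$ — and controlling the uniform convergence of the curves, including at the endpoint $\sigma = 1$ where everything degenerates to the origin; one handles this by choosing the balls small enough (equivalently $\sigma$ bounded away from where $\lambda(B_{r})$ would drop to $\lambda(\O)$), and noting the ranges exhausted as $n\to\infty$ cover all of $[0,1]$.
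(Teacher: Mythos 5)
Your core mechanism is the same as the paper's: form a disjoint union in which one ``main'' component controls the first eigenvalue while the remaining volume is split into $n$ small pieces whose total torsional contribution vanishes as $n\to\infty$ (because $T\sim(\text{volume})^{(d+2)/d}$ is superadditive under splitting), so that in the limit $y$ follows $x^{(d+2)/2}$ exactly. The paper implements this slightly differently, and more cleanly at the endpoint: it keeps the total volume equal to $|\O|$ throughout, taking $\O^n_\sigma$ to be the union of $(1-a_n\sigma)\O$ with $n-1$ scaled copies of $\O$ (not balls) carrying the complementary volume, with $a_n=1-n^{-1/d}$ chosen precisely so that $(1-a_n\sigma)^d\ge 1/n$ for all $\sigma\in[0,1]$, guaranteeing the main copy always has the smallest eigenvalue; the resulting curves are explicit, defined on all of $[0,1]$, and converge uniformly to the limit curve because $(n-1)^{-2/d}\to0$ and $a_n\to1$. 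Your variant, which keeps $\O$ fixed and inflates the total volume by adding balls, is equivalent up to the scale invariance of $(x_\O,y_\O)$, but it has the loose end you yourself flag: reaching $\sigma=1$ (the origin) requires the added volume to diverge, so for fixed $n$ your curve is only defined on $[0,\sigma_n]$ with $\sigma_n<1$, both because of the infinite-volume endpoint and because the individual balls eventually become large enough that their eigenvalue drops below $\lambda(\O)$. To match the statement literally you would still need to reparametrize each truncated curve onto $[0,1]$ and check uniform convergence near the degenerate endpoint; the paper's fixed-volume, shrinking-main-copy version makes this bookkeeping unnecessary. Also note that your opening paragraph (polyhedral approximation and Theorem~\ref{main}) is not needed for this lemma — it enters only in the proof of Theorem~\ref{bsgraph} — and the lemma holds for an arbitrary domain realizing $(x_0,y_0)$.
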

\begin{proof}
Let $\O$ be a domain which gives the point $(x_0,y_0)\in E$, that is
\begin{align*}
x_0=\frac{|B|^{2/d}\lambda(B)}{|\O|^{2/d}\lambda(\O)}\,, && y_0=\frac{|B|^{(d+2)/d}T(\O)}{|\O|^{(d+2)/d}T(B)}\,.
\end{align*}
For every $n$ let $a_n=1-n^{-1/d}$ and, for $\sigma\in[0,1]$, let $\O^n_\sigma$ be the domain which consists of the union of $(1-a_n\sigma)\O$ and $n-1$ disjoint copies of $\left(\frac{1-(1-a_n\sigma)^d}{n-1}\right)^{1/d}\O$. We have $|\O^n_\sigma|=|\O|$ and
\[\begin{cases}
\lambda(\O^n_\sigma)=(1-a_n\sigma)^{-2}\lambda(\O)\\
T(\O^n_\sigma)=\left[(1-a_n\sigma)^{d+2}+(n-1)^{-2/d}\big(1-(1-a_n\sigma)^d\big)^{(d+2)/d}\right]T(\O).
\end{cases}\]
In terms of $(x,y)$ variables we have the curve
\begin{align*}\begin{cases}
x_n(\sigma)=x_0(1-a_n\sigma)^2\\
y_n(\sigma)=y_0\left[(1-a_n\sigma)^{d+2}+(n-1)^{-2/d}\big(1-(1-a_n\sigma)^d\big)^{(d+2)/d}\right]
\end{cases} && \sigma\in[0,1]\,,
\end{align*}
or, in Cartesian coordinates,
\begin{align}\label{equphin}
y=y_0\left[(x/x_0)^{(d+2)/2}+(n-1)^{-2/d}\big(1-(x/x_0)^{d/2}\big)^{(d+2)/d}\right]&& x/x_0\in[(1-a_n)^2,1]\,.
\end{align}
It is immediate to see the uniform convergence of the sequence of curves $\big(x_n(\sigma),y_n(\sigma)\big)$ to the limit curve
\begin{align*}
x(\sigma)=(1-\sigma)^2x_0\,, && y(\sigma)=(1-\sigma)^{d+2}y_0,\qquad \sigma\in[0,1]\,,
\end{align*}
as required.
\end{proof}

We are now in a position to prove our result concerning the structure of the Blaschke-Santal\'o diagram $E$ of all points $(x,y)\in\R^2$ with $x$ and $y$ given by~\eqref{xy}.

\begin{proof}[Proof (of Theorem~\ref{bsgraph})]
In order to prove the existence of an increasing function $h$ satisfying~(\ref{openincl}) it is enough to show that, for every $(x_0,y_0)\in E$, all the points $(x,y)\in [0,1]^2$ with $y> x^{(d+2)/2}$ and with $x>x_0,\, y<y_0$ are also contained in $E$. To obtain this convexity property we rely on Theorem~\ref{main} and Lemma~\ref{ncurves}. More precisely, let $(x_0,y_0)\in E$, and let us first assume that it corresponds via~(\ref{xy}) to a polyhedron $\Omega$. Let then $\Omega_t$, with $t\in [0,1]$, be the $\gamma$-continuous map given by Theorem~\ref{main}, and let $\varphi:[0,1]\to E$ be the map given by $\varphi(t)=(x_t,y_t)$, where $(x_t,y_t)$ is given by~(\ref{xy}) with $\Omega_t$ in place of $\Omega$. By Theorem~\ref{main}, $\varphi$ is a curve which continuously connects $(x_0,y_0)$ with $(1,1)$, and which is increasing in both variables. For every $0\leq t\leq 1$, by Lemma~\ref{ncurves} we have a sequence of continuous curves, explicitely given by~(\ref{equphin}), all starting from $(x_t,y_t)$ and uniformly converging to the graph of $x\mapsto y_t (x/x_t)^{(d+2)/2}$, $x\in [0,x_t]$. A very simple continuity argument, graphically depicted in Figure~\ref{fig2}, implies then that all the points $(x,y)$ with $x_0<x<1$ and $x^{(d+2)/2}<y<y_0$ belong to $E$.
\begin{figure}[htbp]
\begin{tikzpicture}
\draw[line width=1] (0,0) rectangle (10,8);
\draw (0,0) node[anchor=north east] {$(0,0)$};
\draw (10,0) node[anchor=north west] {$(1,0)$};
\draw (0,8) node[anchor=south east] {$(0,1)$};
\draw (10,8) node[anchor=south west] {$(1,1)$};
\draw[line width=1] (0,0) parabola (10,8);
\fill (4,4) circle (2.5pt);
\draw (4,4) node[anchor=south east] {$(x_0,y_0)$};
\draw (4,4) .. controls (5.2,8) and (8,5) .. (10,8);
\draw (6.5,6.35) node[anchor=south] {$\varphi$};
\fill (5,5.76) circle (1.5pt);
\draw[dashed] (0,0) parabola (5,5.76);
\fill (6,6.25) circle (1.5pt);
\draw[dashed] (0,0) parabola (6,6.25);
\fill (7,6.4) circle (1.5pt);
\draw[dashed] (0,0) parabola (7,6.4);
\fill (8,6.55) circle (1.5pt);
\draw[dashed] (0,0) parabola (8,6.55);
\end{tikzpicture}
\caption{Argument of the proof of Theorem~\ref{bsgraph}.}\label{fig2}
\end{figure}
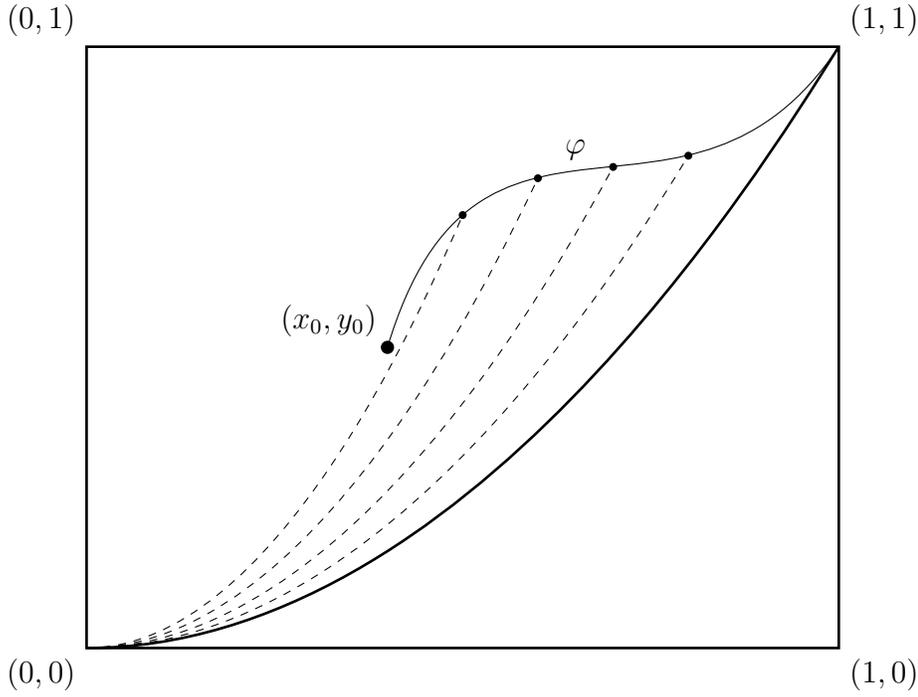
Let us now take a generic point $(x_0,y_0)\in E$, corresponding to an open domain $\Omega$. Let $\{\Omega_k\}_{k\in\N}$ be a sequence of polyhedra which approximate $\Omega$ from inside, hence which $\gamma$-converge to $\Omega$. If we call $(x_k,y_k)$ the numbers given by~(\ref{xy}) with $\Omega_k$ in place of $\Omega$, we have then that the points $(x_k,y_k)$ converge to $(x_0,y_0)$. The argument already presented for polyhedra ensures that every pair $(x,y)\in [0,1]^2$ such that $y> x^{(d+2)/2}$ and such that $x>x_k$ and $y<y_k$ for some $k\in\N$ belongs to $E$. Of course, if $x>x_0$ and $y<y_0$ then $x>x_k$ and $y<y_k$ for $k$ large enough, hence the existence of an increasing function $h$ satisfying~(\ref{openincl}) follows.\par

Finally, concerning the bound~(\ref{estih}) on $h$, the upper one coincides with~\eqref{bfntxy}, and the lower one is proved in~\cite[Proposition~7.2]{BBP20}.
\end{proof}

\begin{rema}\label{remoi}
We conclude with a short discussion about the equalities in~(\ref{openincl}). More precisely, it would be interesting to determine whether or not the points $(x,y)\in [0,1]^2$ with $y=x^{(d+2)/2}$ or with $y=h(x)$ belong to $E$. The first part is actually known. Indeed, as observed in~\cite[Remark~4.2]{Br13}, the Kohler-Jobin inequality~(\ref{kji}) is strict for every set $\Omega$ which is not a ball. Therefore, the point $(x,x^{(d+2)/2})$ does not belong to $E$ for every $0\leq x<1$, while of course $(1,1)\in E$, since it corresponds to the ball. Instead, we do not know whether the points $(x,h(x))$ belong to $E$ for $0<x<1$.
\end{rema}

\bigskip
\noindent{\bf Acknowledgments. }This work is part of the project 2017TEXA3H {\it``Gradient flows, Optimal Transport and Metric Measure Structures''} funded by the Italian Ministry of Research and University. The authors are member of the Gruppo Nazionale per l'Analisi Matematica, la Probabilit\`a e le loro Applicazioni (GNAMPA) of the Istituto Nazionale di Alta Matematica (INdAM).

\bigskip

\bigskip
{\small\noindent
Giuseppe Buttazzo:
Dipartimento di Matematica,
Universit\`a di Pisa\\
Largo B. Pontecorvo 5,
56127 Pisa - ITALY\\
{\tt giuseppe.buttazzo@dm.unipi.it}\\
{\tt http://www.dm.unipi.it/pages/buttazzo/}

\bigskip\noindent
Aldo Pratelli:
Dipartimento di Matematica,
Universit\`a di Pisa\\
Largo B. Pontecorvo 5,
56127 Pisa - ITALY\\
{\tt aldo.pratelli@dm.unipi.it}\\
{\tt http://pagine.dm.unipi.it/pratelli/}}

\end{document}